  \theoremstyle{plain} \newtheorem{thm}{Theorem}
  \newtheorem{lem}[thm]{Lemma} \theoremstyle{definition}
  \newtheorem{defn}[thm]{Definition} \theoremstyle{remark}
  \newcommand{\Drift}{\operatorname{Drift}}
  \newcommand{\Prob}[1]{\operatorname{\mathbb{P}}\left[#1\right]}
  \newcommand{\Reals}{\mathbb{R}}
  \renewcommand{\d}{\operatorname{d}}
  \DeclareMathOperator{\dist}{dist} 
  \newcommand{\ball}{\operatorname{ball}}
  \newcommand{\diam}{\operatorname{diam}}
  \newcommand{\trace}{\operatorname{tr}}
  \newcommand{\Normal}{\nu}
  \newcommand{\unit}{\underline{\mathbf{e}}}
  \newcommand{\II}{\underline{\underline{\mathbb{I}}}}
  \newcommand{\JJ}{\underline{\underline{\mathbb{J}}}}
  \newcommand{\KK}{\underline{\underline{\mathbb{K}}}}
  \renewcommand{\unit}{\operatorname{\mathbf{e}}}
  \renewcommand{\II}{\operatorname{\mathbb{I}}}
  \renewcommand{\JJ}{\operatorname{\mathbb{J}}}
  \renewcommand{\KK}{\operatorname{\mathbb{K}}}
\begin{document}
  \title{Brownian couplings, convexity, and shy-ness} \author{Wilfrid S. Kendall}
  \date{\text{ }}

 \maketitle

\shortcites{JostKendallMoscoRocknerSturm-1997}
 \begin{abstract}\noindent
\citet*{BenjaminiBurdzyChen-2007} introduced the notion of a \emph{shy coupling}: a coupling of a Markov process such that, for suitable starting points, there is a positive chance of the two component processes of the coupling staying at least a given positive distance away from each other for all time. Among other results, they showed that no shy couplings could exist for reflected Brownian motions in \(C^2\) bounded convex planar domains whose boundaries contain no line segments. Here we use potential-theoretic methods to extend this \citet{BenjaminiBurdzyChen-2007} result (a) to all bounded convex domains (whether planar and smooth or not) whose boundaries contain no line segments, (b) to all bounded convex planar domains regardless of further conditions on the boundary.
 \end{abstract}

\emph{MSC 2000 subject classification: 60J65}

 \section{Introduction}\label{sec:introduction}
 \pagenumbering{arabic}
 \setcounter{page}{1}

Motivated by the use of reflection couplings for reflected Brownian motion in a number of contexts (for example efficient coupling as in \citealp{BurdzyKendall-1997a}, and work related to the hotspot conjecture, as in \citealp{AtarBurdzy-2002}, and recent work on Laugesen-Morpurgo conjectures, \citealp{PascuGageonea-2008}), \citet*{BenjaminiBurdzyChen-2007} introduced the notion of shy coupling and studied it in the contexts of Brownian motion on graphs and reflected Brownian motion particularly in convex domains. Shy coupling of two random processes occurs when, for suitable starting points, there is a positive chance of the two component processes of the coupling staying at least a given positive distance away from each other: this of course is in direct contrast to the more usual use of couplings, in which the objective is to arrange for the two processes to meet. Interest in shy couplings is therefore focused on characterizing situations in which shy coupling \emph{cannot} occur. \citet{BenjaminiBurdzyChen-2007} show non-existence of shy coupling for reflected Brownian motions in \(C^2\) bounded convex planar domains whose boundaries contain no line segments.
The present note (influenced by techniques used to study non-confluence of \(\Gamma\protect\)-martingales in \citealp{Kendall-1990a}) uses rather direct potential-theoretic methods to obtain significantly stronger results in the case of reflected Brownian motion in convex domains (Theorems \ref{thm:no-segments} and \ref{thm:planar-case} below). As with \citet{BenjaminiBurdzyChen-2007}, the principal purpose is to contribute to
   a better understanding of probabilistic coupling.

To fix notation, we begin with a formal discussion of coupling and shy coupling.
 In the following \(D\) is a measurable space, and
 the distribution of a Markov process on \(D\) is specified as a
 semigroup of transition probability
measures \(\{P^z_t: t\geq0, z\in D\}\). (The semigroup-based definition allows us to take account of varying starting points, a typical feature of coupling arguments.)
 \begin{defn}[Co-adapted coupling]\label{def:coupling}
   A \emph{coupling} of a Markov process \(\{P^z_t: t\geq0, z\in D\}\)
   on a measurable space \(D\) is a family of random processes \((X,Y)\) on
   \(D^2\), one process
\((X,Y)\) for each pair of starting points \((x_0,y_0)\in D^2\),
   such that \(X\) and \(Y\) each separately are Markov but share the same
  semigroup of transition  probability measures \(\{P^z_t: t\geq0, z\in D\}\). Thus
   for each \(s, t\geq0\) and \(z\in D\) and each measurable
   \(A\subseteq D\) we have
   \begin{equation}
     \begin{split}
       \Prob{X_{s+t}\in A\;|\; X_s=z, \;X_u : 0\leq u\leq
         s}\quad&=\quad
       P^z_t(A)\,,\\
       \Prob{Y_{s+t}\in A\;|\; Y_s=z, \;Y_u : 0\leq u\leq
         s}\quad&=\quad P^z_t(A)\,.
     \end{split}
     \label{eq:coupling}
   \end{equation}
   The coupling is said to be a \emph{co-adapted coupling} if the
   conditioning in \eqref{eq:coupling} can in each case
be extended to include the
   pasts of both \(X\) and \(Y\): for each \(s, t\geq0\) and \(z\in
   D\) and each measurable \(A\subseteq D\)
   \begin{equation}
     \begin{split}
       \Prob{X_{s+t}\in A\;|\; X_s=z, \;X_u, Y_u : 0\leq u\leq
         s}\quad&=\quad
       P^z_t(A)\,,\\
       \Prob{Y_{s+t}\in A\;|\; Y_s=z, \;X_u, Y_u : 0\leq u\leq
         s}\quad&=\quad P^z_t(A)\,.
     \end{split}
     \label{eq:coadapted-coupling}
   \end{equation}
 \end{defn}

 In contrast to \citet{BenjaminiBurdzyChen-2007}'s notion of
 Markovian coupling, we do not require \((X,Y)\) to be Markov. (This
 generalization is convenient but unimportant.) Note that the
 couplings in \citet{BenjaminiBurdzyChen-2007} are all Markovian
 and hence co-adapted.

 We say that \((X, Y)\) begun at \((x_0,y_0)\) \emph{couples
   successfully} on the event 
   \[
   [X_t=Y_t \text{ for all sufficiently
   large }t]\,.
   \]
    Note that a co-adapted coupling can be adjusted on the
 simpler event \([X_t=Y_t \text{ for some }t]\) so as to couple successfully on
 the new event; this need not be the case for more general couplings.
In the remainder of this note the term ``coupling'' will always be short for
``co-adapted coupling''.

 \citet{BenjaminiBurdzyChen-2007}'s notion of
 \emph{shy coupling} for a Markov process on \(D\) is primarily concerned with the
 cases of Brownian motion on graphs and reflected Brownian motion on
 domains in Euclidean space. However their definition is expressed in general terms: suppose that \(D\) is actually a metric
 space equipped with a distance \(\dist\) which furnishes a Borel
 measurability structure:
 \begin{defn}[Shy coupling]
   A coupling \((X, Y)\) is \emph{shy} if there exist two distinct
   starting points \(x_0\neq y_0\) such that for some
   \(\varepsilon>0\)
   \begin{equation}
     \label{eq:shy-coupling}
     \Prob{\dist(X_t, Y_t)>\varepsilon \text{ for all }t\;|\; X_0=x_0, Y_0=y_0}
     \quad>\quad0\,.
   \end{equation}
   In words, \(X\), \(Y\) has positive chance of failing to
   \(\varepsilon\)-couple for some \(\varepsilon>0\) and some pairs of
   starting points.\\
 We say that \((X, Y)\) begun at \((x_0,y_0)\) \emph{\(\varepsilon\)-couples} (for \(\varepsilon>0\))
on the event
\[
[\dist(X_t,Y_t)=\varepsilon \text{ for some }t]\,.
\]
 \end{defn}
 \noindent
 We focus on reflecting Brownian motion in a bounded convex domain in finite-dimensional Euclidean space:
 \begin{defn}[Reflecting Brownian motion]
   A reflecting Brownian motion in the closure \(\overline{D}\) of a
   bounded convex domain \(D\subset \Reals^n\), begun at \(x_0\in D\),
   is a Markov process \(X\) in \(\overline{D}\) solving
   \begin{equation}
     \label{eq:reflecting}
     X_t \quad=\quad x_0 + B_t + \int_0^t \Normal(X_s) \d L^X_s\,,
   \end{equation}
   where \(B\) is \(n\)-dimensional standard Brownian motion, \(L^X\)
   measures local time of \(X\) accumulated at the boundary \(\partial
   D\), and \(\Normal\) is a choice of an inward-pointing unit normal
   vectorfield on \(\partial D\).
 \end{defn}
 \noindent
  Unique solutions of \eqref{eq:reflecting} exist in the case of bounded convex domains \cite[Theorem 3.1]{Tanaka-1979}; this
is a consequence of results about the deterministic Skorokhod's equation.
 Note that reflected Brownian motion can be defined as a semimartingale
 for domains which are much more general than convex domains (see for
 example the treatment of Lipschitz domains in \citealp{BassHsu-1990});
 however the results of this paper are concerned entirely with the convex case.

 Recall that a convex domain in Euclidean spaces can be viewed as the
 intersection of countably many half-spaces.  Consequently the
 inward-pointing unit normal vectorfield \(\Normal\) is unique up to a
 subset of \(\partial D=\overline{D}\setminus D\) of zero Hausdorff
 \((n-1)\)-measure.  The local time term \(L^X\) can be defined using the
 Skorokhod construction: it is the minimal non-decreasing process such
 that the solution of \eqref{eq:reflecting} stays confined within
 \(\overline{D}\). It is determined by the choice of
normal vectorfield \(\Normal\).

 Arguments from stochastic calculus show that all co-adapted couplings
 of such reflected Brownian motions can be represented in terms of
 co-adapted couplings \((A,B)\) of \(n\)-dimensional standard Brownian
 motions, which in turn must satisfy
 \[
 A_t \quad=\quad \int_0^t \JJ_s^\top \d
 B_s + \int_0^t \KK_s^\top \d C_s\,,
 \]
 where \(C\) is an \(n\)-dimensional standard Brownian motion
 independent of \(B\), and \(\JJ\),
 \(\KK\) are \((n\times
 n)\)-matrix-valued random processes, adapted to the filtration
 generated by \(B\) and \(C\), and satisfying the identity
 \[
 \JJ^\top\,\JJ
 +
 \KK^\top\,\KK
 \quad=\quad\II^{(n)} \text{ (the
   identity matrix on \(\Reals^n\))}\,.
 \]
A proof of this fact can be found in passing on page 297 of \citet{Emery-2005}.
An explicit statement and a slightly more direct proof is to be found below at Lemma \ref{lem:well-known}.

 Thus we will study \(X\) and \(Y\) such that
 \begin{equation}
   \label{eq:reflected-coupling}
   \begin{split}
     &X_t \quad=\quad x_0 + B_t + \int_0^t \Normal(X_s) \d L_s^X\,,\\
     &Y_t \quad=\quad y_0 + \int_0^t
     \JJ_s^\top \d B_s + \int_0^t
     \KK_s^\top \d C_s
     + \int_0^t \Normal(Y_s) \d L_s^Y\,,\\
     &\JJ_t^\top\,\JJ_t
     +\KK_t^\top\,\KK_t
     \quad=\quad\II^{(n)}\,.
   \end{split}
 \end{equation}
 The results of this note concern all possible co-adapted couplings,
 of which two important examples are:
 \begin{enumerate}
 \item the \emph{reflection coupling}:\\
   \(\KK=0\) and
   \(\JJ_t=
   \II^{(n)}-2
   \unit_t\unit_t^\top\), where
   \(\unit_t=(X_t-Y_t)/\|X_t-Y_t\|\), so that \(\d A\) is the
   reflection of \(\d B\) in the hyperplane bisecting the segment from
   \(X_t\) to \(Y_t\);
 \item[] and its opposite,
 \item the \emph{perverse coupling}:\\
   \(\KK=0\) and
   \(\JJ_t=-\II^{(n)}+2
   \unit_t\unit_t^\top\), so that the distance
   \(\|X-Y\|\) has trajectories purely of locally bounded variation.
   (This is related to the uncoupled construction in \citealp[Exercise 5.43]{Emery-1989} of the planar process
   \(Z^{\prime\prime}\) with deterministic radial part.)
 \end{enumerate}
 In practice, as is typically the case when studying Brownian
 couplings, general proofs follow easily from the special case when
 \(\KK=0\) and
 \(\JJ\) is an orthogonal matrix. (Heuristic remarks related to this observation are to be found in \citealp[Section
 2]{Kendall-2005}.)

 Note that the existence question for reflection couplings in
convex domains is non-trivial
 (\citealt{AtarBurdzy-2004} establish existence for the more general case of
lip domains), but
 is not relevant for our purposes.

 \citet{BenjaminiBurdzyChen-2007} use ingenious arguments based
 on ideas from differential games to show that a bounded convex planar
 domain cannot support any shy couplings of reflected Brownian motions if
 the boundary is \(C^2\) and contains no line segments
 \citep[Theorem 4.3]{BenjaminiBurdzyChen-2007}. Here we
 describe the use of rather direct potential-theoretic methods (summarized in
 section \ref{sec:potential}); in section
 \ref{sec:results} these are used to generalize the
 \citet{BenjaminiBurdzyChen-2007} result to the cases of (a) all
 bounded convex domains in Euclidean space of whatever dimension with
 boundaries which need not be smooth but must contain no line segments
 (Theorem \ref{thm:no-segments}), and (b) all bounded convex planar
 domains (Theorem \ref{thm:planar-case}). Further extensions and conjectures are discussed in section
 \ref{sec:conclusion}.


 \section{Some lemmas from stochastic calculus and potential theory}\label{sec:potential}
 The following two lemmas from probabilistic potential theory are
 fundamental for our approach.
 \begin{lem}\label{lem:potential-1}
   For \(D\subset \Reals^n\) a bounded domain, for fixed
   \(\varepsilon>0\), consider the closed and bounded (and therefore
   compact) subset of \(\Reals^n\times\Reals^n\) given by
   \[
   F \quad=\quad (\overline{D}\times\overline{D})\setminus \{(x,y):
   \dist(x,y)<\varepsilon\}\,.
   \]
   Suppose there is a continuous function \(\Psi : F\to\Reals\) such
   that the following random process \(Z\) is a
   supermartingale for \emph{any} co-adapted coupling of reflecting
   Brownian motions \(X\) and \(Y\) in the closure \(\overline{D}\),
   when \(S\) is the exit time of \((X, Y)\) from \(F\):
   \[
   Z_t \quad=\quad \Psi(X_{t\wedge S}, Y_{t\wedge S}) + t\wedge S\,.
   \]
   Then for any such coupling almost surely \(S<\infty\) and \(\dist(X_S,
   Y_S)=\varepsilon\).
 \end{lem}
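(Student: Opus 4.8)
The plan is to play off the two competing effects built into $Z$: the ``potential'' term $\Psi(X_{t\wedge S},Y_{t\wedge S})$, which is bounded, and the additive clock $t\wedge S$, which grows linearly. First I would set $M=\sup_{(x,y)\in F}|\Psi(x,y)|$, which is finite since $\Psi$ is continuous on the compact set $F$. Because $F$ is closed and the paths of $X$ and $Y$ are continuous, the stopped pair $(X_{t\wedge S},Y_{t\wedge S})$ lies in $F$ for every $t$ (whether or not $S<\infty$), so $\Psi(X_{t\wedge S},Y_{t\wedge S})\in[-M,M]$; in particular $Z_t$ is integrable --- it lies between $-M$ and $M+t$ --- and the supermartingale property yields the elementary deterministic-time inequality $\Expect{Z_t}\le\Expect{Z_0}=\Psi(x_0,y_0)$ for every fixed $t\ge0$, with no need for optional stopping or uniform integrability.

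Rearranging this, and using $\Expect{Z_t}=\Expect{t\wedge S}+\Expect{\Psi(X_{t\wedge S},Y_{t\wedge S})}$, we obtain
\[
\Expect{t\wedge S}\;\le\;\Psi(x_0,y_0)-\Expect{\Psi(X_{t\wedge S},Y_{t\wedge S})}\;\le\;2M
\]
for all $t\ge0$. Letting $t\uparrow\infty$ and invoking monotone convergence gives $\Expect{S}\le 2M<\infty$, whence $S<\infty$ almost surely. I expect this to be the only step needing any care, and even here the care is routine: one must check that the supermartingale bound $\Expect{Z_t}\le\Expect{Z_0}$ genuinely applies (guaranteed by boundedness of $\Psi$ on the compact set $F$) and that the passage to the limit in $t$ is legitimate (monotone convergence, since $t\wedge S\uparrow S$). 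The substantive content of the lemma is really the \emph{hypothesis} that such a $\Psi$ exists; the deduction itself is soft.

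It remains to identify the exit location. If $\dist(x_0,y_0)=\varepsilon$ then $S=0$ and the conclusion is immediate, while if $\dist(x_0,y_0)<\varepsilon$ the statement is vacuous since then $(x_0,y_0)\notin F$. So suppose $\dist(x_0,y_0)>\varepsilon$ and work on the almost-sure event $\{S<\infty\}$. Since $X$ and $Y$ have continuous paths and never leave $\overline{D}$, the only way $(X,Y)$ can exit $F$ is for $\dist(X_t,Y_t)$ to drop below $\varepsilon$; hence $\dist(X_t,Y_t)\ge\varepsilon$ for all $t<S$, and by continuity $\dist(X_S,Y_S)\ge\varepsilon$. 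If we had $\dist(X_S,Y_S)>\varepsilon$, continuity would furnish $\delta>0$ with $\dist(X_t,Y_t)>\varepsilon$ throughout $(S-\delta,S+\delta)$, so that $\dist(X_t,Y_t)\ge\varepsilon$ for all $t<S+\delta$, contradicting the fact that $S$ is the first exit time from $F$. Therefore $\dist(X_S,Y_S)=\varepsilon$, which completes the argument.
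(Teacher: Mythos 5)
Your proof is correct; it arrives at the same conclusion as the paper's argument but by a noticeably different route. The paper applies the almost-sure convergence theorem for supermartingales bounded below: since $\Psi$ is bounded, $Z$ is bounded below and hence converges a.s.\ to a finite limit, yet on $[S=\infty]$ the additive clock would force $Z\to\infty$, so $\Prob{S<\infty}=1$. You instead stay entirely at the level of the elementary inequality $\Expect{Z_t}\le\Expect{Z_0}$ for deterministic $t$ (which, as you note, needs no optional stopping), rearrange to get $\Expect{t\wedge S}\le 2M$ uniformly in $t$, and pass to the limit by monotone convergence. Your version is a little more elementary and, as a bonus, yields the explicit quantitative bound $\Expect{S}\le 2M$ rather than just $S<\infty$ a.s.; this is in the spirit of the remark the paper makes \emph{after} the proof, that existence of such a $\Psi$ in fact implies uniform bounds on exponential moments of $S$. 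The identification of the exit location is essentially the same in both: since $X$ and $Y$ remain in $\overline{D}$, exit from $F$ can only occur through the locus $\dist(x,y)=\varepsilon$, and you spell out the continuity argument slightly more carefully than the paper does. The one cosmetic quibble is that if $\dist(x_0,y_0)=\varepsilon$ the process has not ``exited'' $F$ at time $0$ (the pair is still in $F$), so $S$ is the first time the distance drops strictly below $\varepsilon$; this does not affect the conclusion.
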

 \begin{proof}
   Being a continuous function on a compact set
   \(F\), \(\Psi\) must be bounded. Thus for any co-adapted coupling \((X,Y)\) the
   supermartingale \(Z\) is bounded below, and so almost
   surely it must converge to a finite value at time \(\infty\). But
   boundedness of \(\Psi\) also means that \(Z\) must almost surely
   tend to infinity on the event \([S=\infty]\). These two
   requirements on \(Z\) force the conclusion that
   \(\Prob{S<\infty}=1\). Since \(X\) and \(Y\) each reflect off \(\partial
   D\), it follows that the exit point \((X_S,Y_S)\) must belong to the part of
   \(\partial F\) which is contained in the boundary of \(\{(x,y):
   \dist(x,y)<\varepsilon\}\); hence almost surely \(\dist(X_S,
   Y_S)=\varepsilon\).
 \end{proof}
 \noindent
 Consequently, if one can exhibit such a \(\Psi\) for a specified \(D\) then any
 coupled \(X\), \(Y\) in \(\overline{D}\) must
 \(\varepsilon\)-couple.  Indeed the existence of such a \(\Psi\)
 implies a uniform bound on exponential moments of the
 \(\varepsilon\)-coupling time \(S\).

 As implied by \citet[Example 4.2]{BenjaminiBurdzyChen-2007}, the
 obvious possibility \(\Psi=c\|X-Y\|^\alpha\) (for positive \(c\),
 \(\alpha\) with \(\alpha\) small) is not suitable here; the perverse
 coupling of the previous section is an example for which all such
 \(\Psi\) lead to \(\Psi(X_{t\wedge S}, Y_{t\wedge S}) + t\wedge S\)
 being a strictly increasing process when neither \(X\) nor \(Y\)
 belong to \(\partial D\). Nevertheless we will see in the next
 section how to construct \(\Psi\) which work simultaneously for all
 co-adapted couplings in a wide range of bounded convex domains.

 Before embarking on this, it is convenient to introduce a further
 lemma motivated by \citet{Ito-1975}'s approach to stochastic
 calculus using modules of stochastic differentials: if \(Z=M+A\) is
 the Doob-Meyer decomposition of a semimartingale \(Z\) into local
 martingale \(M\) and process \(A\) of locally bounded variation, then
 write \(\Drift \d Z=\d A\) and write \((\d Z)^2=\d[M,M]\) for the differential
 of the (increasing) bracket process of \(M\).
 \begin{lem}\label{lem:potential-2}
   The conclusion of Lemma \ref{lem:potential-1} holds if there is a
   continuous \(\Phi:F\to\Reals\) such that, for all co-adapted
   couplings \((X, Y)\) as above,
   \begin{itemize}
   \item[] \(Z=\Phi(X,Y)\) is a semimartingale;
   \item[] moreover the stochastic differential \(\d Z\) satisfies the
     following \emph{random measure} inequalities (with \(a\),
     \(b>0\)):
     \begin{enumerate}
     \item \textbf{(Volatility bounded from below)} \((\d Z)^2 > a \d t\) up to time \(S\);\label{req:1}
     \item \textbf{(Drift bounded from above)} \(\Drift \d Z < b \d t\) up to time \(S\).\label{req:2}
     \end{enumerate}
   \end{itemize}
 \end{lem}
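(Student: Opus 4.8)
The plan is to construct a function $\Psi$ satisfying the hypotheses of Lemma~\ref{lem:potential-1} by setting $\Psi=g\circ\Phi$ for a suitably chosen smooth, strictly increasing, strictly concave real function $g$; the strict concavity is exactly what will convert the lower bound on volatility into an \emph{upper} bound on the second-order term produced by \Ito's formula, after which the upper bound on the drift handles the first-order term.

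First, since $F$ is compact and $\Phi$ is continuous, $\Phi(F)$ lies in a bounded interval $[p,q]$, so that for any co-adapted coupling the process $Z=\Phi(X,Y)$ — which is a continuous semimartingale, as $X$ and $Y$ have continuous paths — takes values in $[p,q]$ for $t\le S$; write $Z=M+A$ for its Doob-Meyer decomposition, so that $\Drift\,\d Z=\d A$ and $(\d Z)^2=\d[M,M]$. Next I would choose $g\in C^2(\Reals)$ with
\[
\tfrac{a}{2}\,g''(z)+b\,g'(z)\quad\le\quad-1\qquad\text{and}\qquad g'(z)\;>\;0\qquad\text{for all }z\in[p,q]\,.
\]
Such a $g$ exists: let $u=g'$ be the solution on $[p,q]$ of the linear ODE $\tfrac{a}{2}u'+bu=-1$, namely $u(z)=Ce^{-2bz/a}-1/b$, with the integration constant $C$ large enough that $u(q)>0$; then $u>0$ throughout $[p,q]$ (since $e^{-2bz/a}$ is decreasing), hence $u'=-\tfrac{2b}{a}(u+1/b)<0$ there, so that $g''<0$ and in particular $g$ is strictly concave on $[p,q]$; finally extend $g$ to a $C^2$ function on $\Reals$ in any way whatever, the values of $g$ outside $[p,q]$ being irrelevant because $Z$ is stopped at $S$. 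Then put $\Psi=g\circ\Phi$, which is continuous on $F$.

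It remains to check that $Z'_t:=\Psi(X_{t\wedge S},Y_{t\wedge S})+t\wedge S=g(Z_{t\wedge S})+t\wedge S$ is a supermartingale for every co-adapted coupling. \Ito's formula applied to $g$ and the stopped semimartingale gives, up to time $S$, $\Drift\,\d\,g(Z)=g'(Z)\,\Drift\,\d Z+\tfrac12 g''(Z)\,(\d Z)^2$; multiplying the random-measure inequality $\Drift\,\d Z<b\,\d t$ by the non-negative process $g'(Z)$, multiplying the random-measure inequality $(\d Z)^2>a\,\d t$ by the \emph{negative} process $g''(Z)$ (which reverses it), and adding, one obtains from the defining property of $g$ that
\[
\Drift\,\d\,g(Z)\quad\le\quad\bigl(b\,g'(Z)+\tfrac{a}{2}g''(Z)\bigr)\,\d t\quad\le\quad-\,\d t\qquad\text{up to time }S\,.
\]
Thus $Z'$ has a continuous, non-increasing finite-variation part, and it is bounded below because $g$ is bounded on $[p,q]$ and $t\wedge S\ge0$. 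Its local-martingale part $\int_0^{t\wedge S}g'(Z_s)\,\d M_s$ is a genuine continuous local martingale (the integrand is bounded on $[0,S]$) and is itself bounded below, being $Z'$ minus the constant $Z'_0$ minus its non-positive finite-variation part; hence it is a supermartingale, and therefore so is $Z'$. Consequently $\Psi$ satisfies the hypotheses of Lemma~\ref{lem:potential-1}, and the conclusion of that lemma follows.

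The one place that will require care is the sign bookkeeping in the last display: it is essential that $g$ be concave, so that $g''(Z)<0$ turns the lower bound ``$(\d Z)^2>a\,\d t$'' into an upper bound on $\tfrac12 g''(Z)(\d Z)^2$; and it is worth observing that no upper bound on $(\d Z)^2$ is assumed, so that the supermartingale property has to be extracted from boundedness below rather than from an $L^2$ estimate on the martingale part. The construction of $g$ and the \Ito computation are otherwise entirely routine.
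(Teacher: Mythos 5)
Your proposal is correct and takes essentially the same route as the paper: both construct a $C^2$, strictly increasing, strictly concave function $g$ so that It\^o's formula together with the volatility lower bound and drift upper bound forces $g(Z_{t\wedge S})+t\wedge S$ to be a bounded-below supermartingale, after which Lemma~\ref{lem:potential-1} applies. The only difference is cosmetic --- the paper writes down $\Psi = c\bigl(1-\exp(-\lambda\Phi)\bigr)$ explicitly and then chooses $\lambda > 2b/a$ and $c$ large so that $\tfrac{a}{2}g''+bg'\le -1$ on the range of $\Phi$, whereas you arrive at the same exponential family by solving the linear ODE $\tfrac{a}{2}g''+bg'=-1$ for $g'$.
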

 \begin{proof}
   For \(c, \lambda>0\) to be chosen at the end of the proof, set \(\Psi(x,y)=c (1-
   \exp(-\lambda \Phi(x,y)))\) and apply It\^o's lemma to
   \(\Psi(X,Y)=c(1-\exp(-\lambda Z))\) up to the exit time \(S\):
   \begin{multline*}
     \Drift \d \Psi(X,Y)\quad=\quad c\lambda\exp(-\lambda
     Z)\left(\Drift
       \d Z - \tfrac{1}{2} \lambda (\d Z)^2\right)
     \quad\leq\quad - \tfrac{c}{2}\lambda\exp(-\lambda Z)\left(\lambda
       a - 2b\right)\d t
   \end{multline*}
   (where the inequality is viewed as an inequality for random
   measures). Fix \(\lambda > 2b/a\) and \(c>2\lambda^{-1}\tfrac{\exp(\lambda
   \max\{\Phi\})}{\lambda a-2b}\). The above calculation then shows
   that \(\Psi(X_{t\wedge S},Y_{t\wedge S})+(t\wedge S)\) is a
   supermartingale, so the conclusion of Lemma \ref{lem:potential-1}
   holds.
 \end{proof}


As mentioned above in section \ref{sec:introduction}, 
the following stochastic calculus result is to be found unannounced and in passing on page 297 of \citet{Emery-2005}.
It provides an explicit representation for co-adaptively coupled Brownian motions.
We state it here as a lemma and indicate a direct proof in order to establish an explicit statement of the result in
the literature.
\begin{lem}\label{lem:well-known}
Suppose that \(A\) and \(B\) are two co-adapted \(n\)-dimensional Brownian motions. Augmenting the filtration if necessary by adding an independent adapted \(n\)-dimensional Brownian motion \(D\), it is possible to construct an adapted \(n\)-dimensional Brownian motion \(C\), independent of \(B\), such that
\[
 A \quad=\quad \int \JJ^\top \d
 B + \int \KK^\top \d C\,,
 \]
 where \(\JJ\),
 \(\KK\) are \((n\times
 n)\)-matrix-valued predictable random processes, satisfying the identity
 \[
 \JJ^\top\,\JJ
 +
 \KK^\top\,\KK
 \quad=\quad\II^{(n)} \text{ (the
   identity matrix on \(\Reals^n\))}\,.
 \]
\end{lem}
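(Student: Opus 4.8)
The plan is to obtain the representation by orthogonally decomposing the martingale differential \(\d A\) into the part ``carried by'' \(\d B\) and a remainder orthogonal to \(B\), and then to realise that remainder as a stochastic integral against a Brownian motion manufactured out of the auxiliary process \(D\).

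First I would observe that each coordinate \(A^i\) and \(B^j\) is a continuous martingale with \([A^i]_t=[B^j]_t=t\), so the Kunita--Watanabe inequality makes \(t\mapsto[A^i,B^j]_t\) Lipschitz; hence \([A^i,B^j]_t=\int_0^t H^{ij}_s\,\d s\) for a predictable matrix-valued process \(H=(H^{ij})\) with entries bounded in modulus by \(1\) (e.g.\ take \(H^{ij}_s=\limsup_{h\downarrow0}h^{-1}([A^i,B^j]_s-[A^i,B^j]_{(s-h)^+})\)). Setting \(\JJ=H^\top\) and \(N=A-\int\JJ^\top\,\d B\), a short quadratic-covariation computation yields \([N,B]\equiv0\) and \(\d[N^i,N^j]_t=(\II^{(n)}-\JJ_t^\top\JJ_t)_{ij}\,\d t\). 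Since \([N,N]\) is non-decreasing in the positive-semidefinite order, the predictable process \(G_t:=\II^{(n)}-\JJ_t^\top\JJ_t\) is positive semidefinite for Lebesgue-almost-every \(t\), almost surely; modifying \(H\) on a predictable \(\d t\otimes\d\mathbb{P}\)-null set we may take \(G_t\) positive semidefinite everywhere, and then \(\KK_t:=G_t^{1/2}\) (symmetric, and predictable by continuity of the matrix square root) satisfies \(\JJ^\top\JJ+\KK^\top\KK=\JJ^\top\JJ+G=\II^{(n)}\).

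The substantive step is the construction of \(C\), and here the possible rank-deficiency of \(\KK\) is exactly what forces the enlargement by \(D\): for the reflection coupling one has \(G\equiv0\), so no suitable \(C\) can be recovered from \(A\) and \(B\) alone. Working henceforth in the filtration generated by the original one together with \(D\) --- under which \(A\) and \(B\) remain Brownian motions, and \(N\) a continuous local martingale, since \(D\) is independent --- I would let \(\KK_t^+\) be the Moore--Penrose pseudo-inverse of \(\KK_t\) (a Borel, hence predictable, function of \(\KK_t\)) and \(\pi_t:=\KK_t\KK_t^+\) the orthogonal projection onto the range of \(\KK_t\), and set
\[
C_t \quad=\quad \int_0^t \KK_s^+\,\d N_s \;+\; \int_0^t(\II^{(n)}-\pi_s)\,\d D_s\,.
\]
From \(\KK_s^+ G_s\KK_s^+=\pi_s\), \((\II^{(n)}-\pi_s)^2=\II^{(n)}-\pi_s\), and \([N,D]\equiv0\) (which kills the cross term) one checks \([C^i,C^j]_t=\delta_{ij}\,t\), so \(C\) is an \(n\)-dimensional Brownian motion by L\'evy's criterion; and \([N,B]=0\) together with \([D,B]=0\) give \([C,B]\equiv0\), so that \((C,B)\) is a \(2n\)-dimensional Brownian motion and in particular \(C\) is independent of \(B\). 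Finally, \(\KK_s(\II^{(n)}-\pi_s)=0\) and \(\KK_s\KK_s^+=\pi_s\) (using \(\KK_s=\KK_s^\top\)) give \(\int_0^t\KK_s^\top\,\d C_s=\int_0^t\pi_s\,\d N_s\), while \(\int_0^t(\II^{(n)}-\pi_s)\,\d N_s\) has vanishing quadratic variation, because \((\II^{(n)}-\pi_s)G_s(\II^{(n)}-\pi_s)=0\), and is therefore null; hence \(\int\KK^\top\,\d C=N\) and \(\int\JJ^\top\,\d B+\int\KK^\top\,\d C=\bigl(\int\JJ^\top\,\d B\bigr)+N=A\), as required.

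The main obstacle is thus precisely the degeneracy of \(\KK\): one cannot simply invert \(\KK^\top\) to read off \(C\), and the role of adjoining \(D\) and of the pseudo-inverse is to supply the missing Brownian directions without disturbing independence from \(B\). The remaining ingredients --- boundedness of \(\JJ\) and \(\KK\) (so the stochastic integrals make sense), predictability of \(G^{1/2}\) and \(\KK^+\) from Borel measurability of these matrix operations, and the two appeals to L\'evy's characterisation --- are routine.
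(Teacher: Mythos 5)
Your proof is correct and follows essentially the same route as the paper's: Kunita--Watanabe gives the predictable matrix \(\JJ\) with \(0\le\JJ^\top\JJ\le\II^{(n)}\); \(\KK\) is taken as the symmetric nonnegative square root of \(\II^{(n)}-\JJ^\top\JJ\); the residual martingale \(N=A-\int\JJ^\top\d B\) is orthogonal to \(B\); and \(C\) is built from \(\KK^+\d N\) together with the kernel-directed part of \(\d D\), with L\'evy's criterion then giving that \(C\) is Brownian and independent of \(B\). The only real difference is cosmetic: the paper obtains predictability of \(\KK\), \(\KK^+\) and the kernel projection \(\HH_1=\II^{(n)}-\pi\) by an explicit iterative spectral construction, whereas you invoke Borel measurability of the matrix square root and Moore--Penrose pseudo-inverse directly — both are sound.
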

\newcommand{\HH}{\mathbb{H}}
\begin{proof}
Certainly the quadratic covariation between the vector semimartingales \(A\) and \(B\) may be expressed as \(\d A\,\d B^\top=\JJ^\top\d t\) for a predictable \((n\times
 n)\)-matrix-valued predictable random process \(\JJ\) such that the symmetric matrix inequality \(0\leq \JJ^\top\JJ\leq \II^{(n)}\) holds in the spectral sense (\(0\leq x^\top\JJ^\top\JJ x\leq x^\top x\) for all vectors \(x\)); this is a consequence of the Kunita-Watanabe inequality. Since \(\JJ^\top\JJ\) is a symmetric contraction matrix we find \((\JJ^\top\JJ)^k\to\HH_1\) as \(k\to\infty\), where \(\HH_1\) is the orthogonal projection onto the null-space of \(\JJ^\top\JJ-\II^{(n)}\). This exhibits \(\HH_1\) as a measurable function of \(\JJ\). We may now extract \(\lambda_2^2=\sup\{x^\top(\JJ^\top\JJ-\HH_1)x:\|x\|=1\}\) as a measurable function of \(\JJ\), renormalize and study \((\JJ^\top\JJ-\HH_1)^k/\lambda_2^{2k}\to\HH_2\), and continue so as to represent
\begin{equation}\label{eq:spectral-decomposition}
\JJ^\top\JJ \quad=\quad \HH_1 + \sum_{i=2}^N \lambda_i^2 \,\HH_i\,,
\end{equation}
where \(\HH_1\), \(\HH_2\), \ldots, \(\HH_N\) are disjoint orthogonal projections, \(N\leq n\), and \(1>\lambda_2>\ldots>\lambda_N>0\) and all quantities are measurable functions of \(\JJ\).

Set \(\HH_0=\II^{(n)}-\HH_1 - \HH_2 - \ldots - \HH_N\) and define
\[
\KK \quad=\quad \HH_0 + \sum_{i=2}^N \sqrt{1-\lambda_i^2}\,\HH_i
\]
as the non-negative symmetric square root of \(\II^{(n)}-\JJ^\top\JJ\). The above spectral approach defines \(\KK\) as a measurable function of the matrix \(\JJ\), and hence we may view \(\KK\) as well as \(\JJ\) as predictable \((n\times
 n)\)-matrix-valued predictable random processes, now satisfying \(\JJ^\top\,\JJ
 +
 \KK^\top\,\KK
=\II^{(n)}\).

Moreover we may construct a pseudo-inverse of \(\KK\) as a further predictable symmetric process:
\[
\KK^+\quad=\quad \HH_0 + \sum_{i=2}^N \frac{1}{\sqrt{1-\lambda_i^2}}\,\HH_i\,,
\]
so that \(\KK^+\,\KK=\KK\,\KK^+=\II^{(n)}-\HH_1\).

Now define \(C\) by
\[
\d C\quad=\quad \KK^+(\d A - \JJ^\top\d B) + \HH_1\d D\,.
\]
The quadratic variation of \(\d A - \JJ^\top\d B\) is
\[
(\d A - \JJ^\top\d B)(\d A - \JJ^\top\d B)^\top\quad=\quad (\II^{(n)}-\JJ^\top\JJ)\d t=\KK^\top\KK\d t\,,
\]
and so the stochastic differential \(\KK^+(\d A - \JJ^\top\d B)\) has finite quadratic variation \((\II^{(n)}-\HH_1)\d t\)
and therefore (by the \(L^2\) theory of stochastic differentials of continuous semimartingales) it is a martingale differential.
Hence \(C\) is a continuous martingale with quadratic variation
\(\d C\,\d C^\top=\II^{(n)}\d t\), by which we may deduce that \(C\) is \(n\)-dimensional Brownian motion.
Moreover \(\d C\,\d B^\top=\KK^+(\d A - \JJ^\top\d B)\d B^\top=\KK^+(\JJ^\top-\JJ^\top)\d t=0\), so \(C\) is independent of \(B\).

Finally \(\KK^\top\d C = (\II^{(n)}-\HH_1)(\d A-\JJ^\top\d B)\) (use \(\KK^\top\HH_1=0\)) and \(\HH_1(\d A-\JJ^\top\d B)\) is a martingale differential with quadratic variation
\begin{multline*}
 \HH_1(\d A-\JJ^\top\d B)(\d A-\JJ^\top\d B)^\top\HH_1
\quad=\quad
\HH_1(\II^{(n)}-\JJ^\top\JJ)\HH_1\d t
\quad=\quad 
\HH_1\KK^\top\KK\HH_1\d t\quad=\quad0\,.
\end{multline*}
So \(\KK^\top\d C=\d A-\JJ^\top\d B\), which establishes the result.
\end{proof}
\citet{Emery-2005} uses an approximation argument, which we circumvent here by exhibiting \(\KK\) as a predictable process via the spectral decomposition \eqref{eq:spectral-decomposition}. Note also that the extra Brownian motion \(D\) is not required if \(\JJ^\top\,\JJ<\II^{(n)}\),
in which case \(\HH_1=0\).

 \section{Theorems and proofs}\label{sec:results}
 The first theorem generalizes the planar result of \citet[Theorem
 4.3]{BenjaminiBurdzyChen-2007} to the case of higher dimensions, and
 removes the requirement of boundary smoothness.
 \begin{thm}\label{thm:no-segments}
   Let \(\overline{D}\) be the closure of a bounded convex domain in \(\Reals^n\) such that
   \(\partial D\) contains no line segments. Then no co-adapted
   coupling of reflected Brownian motions in \(\overline{D}\) can be shy.
 \end{thm}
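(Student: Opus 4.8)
The plan is to construct, for every fixed $\varepsilon>0$, a function $\Phi$ on the compact set $F=(\overline{D}\times\overline{D})\setminus\{(x,y):\dist(x,y)<\varepsilon\}$ which satisfies the two random-measure conditions of Lemma \ref{lem:potential-2}, uniformly over all co-adapted couplings $(X,Y)$. Given such a $\Phi$, Lemma \ref{lem:potential-2} (via Lemma \ref{lem:potential-1}) shows that any coupling must $\varepsilon$-couple almost surely; since $\varepsilon>0$ is arbitrary, no coupling can be shy. The natural first guess $\Phi=\|x-y\|$ fails, as the introduction notes, because the perverse coupling makes $\Drift\d\|X-Y\|$ positive rather than negative; so we need to exploit the convex geometry more cleverly. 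The idea (following the potential-theoretic treatment of non-confluence of $\Gamma$-martingales in \citealp{Kendall-1990a}) is to use a function of the form $\Phi(x,y)=g(\|x-y\|)+h(x,y)$, where $g$ is concave increasing (to extract a strictly negative second-order drift term from the volatility of $\|X-Y\|$) and $h$ is a carefully chosen correction designed to kill the bad contributions from the boundary reflection terms $\Normal(X)\d L^X$ and $\Normal(Y)\d L^Y$.

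Here is where the no-segments hypothesis enters. By the reduction described after \eqref{eq:reflected-coupling} we may restrict attention to the case $\KK=0$, $\JJ$ orthogonal. Write $\unit=(X-Y)/\|X-Y\|$. Applying Itô's formula to $\|X-Y\|$ one gets a martingale part with volatility $(\d(X-Y))^2$ controlled below by the fact that $\JJ$ is orthogonal — the worst case being the perverse coupling, where the Brownian part of $\d(X-Y)$ is purely radial, giving volatility exactly $0$ in the directions perpendicular to $\unit$; but the key point is that $(\d\|X-Y\|)^2\geq$ (something)$\,\d t$ only fails when the coupling is arranged so that $\d(X-Y)$ has no transverse component, and then the geometric curvature of $\partial D$ forces a strictly negative drift through the local-time terms. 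Precisely: the drift of $\|X-Y\|$ contains the term $\langle\unit,\Normal(X)\rangle\d L^X-\langle\unit,\Normal(Y)\rangle\d L^Y$; convexity of $D$ makes each of these contributions non-positive in the relevant direction (the inward normal at $X$ points "away" from $Y$ when $\|X-Y\|$ is near its maximum, etc.), and the absence of line segments in $\partial D$ guarantees that whenever $\|X-Y\|$ is large enough that a boundary visit is possible, the normals at $X$ and $Y$ are not anti-parallel to $\unit$, so one gets a \emph{strict} inequality. One must also handle the perverse-type couplings where $L^X=L^Y=0$ for a while: there $\|X-Y\|$ is locally of bounded variation with zero drift, so $\Phi=\|x-y\|$ alone gives zero volatility and zero drift, and we genuinely need the concave $g$ together with a uniform lower bound on the non-radial volatility that comes from the structure of $\JJ$ near the boundary. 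Assembling these estimates into a single $\Phi$ with constants $a,b$ valid simultaneously for all couplings is the substantive content.

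The main obstacle I expect is exactly this uniformity over \emph{all} co-adapted couplings, including the degenerate ones ($\KK\neq0$, $\JJ$ singular, perverse). The reflection coupling and the perverse coupling are the two extreme cases and $\Phi$ must handle both: the reflection coupling makes $\|X-Y\|$ a time-changed Brownian motion with a drift from the boundary, so requirement \ref{req:1} is easy but requirement \ref{req:2} needs the convex/no-segment geometry; the perverse coupling kills the interior volatility entirely, so requirement \ref{req:1} is the hard one and must be recovered purely from the boundary behaviour. The trick will be that near $\partial D$, when $\|X-Y\|$ is close to its maximum over $\overline{D}$, the local-time push necessarily has a component transverse to $\unit$ (because $\partial D$ is curved, not flat), and this contributes to $(\d\|X-Y\|)^2$. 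Quantifying "curved" uniformly on a compact convex body whose boundary contains no line segments — i.e. getting a single modulus of non-degeneracy — is the delicate step; it is presumably done by a compactness argument, since the set of pairs $(x,y)\in F$ with $x\in\partial D$ (or $y\in\partial D$) realizing a near-maximal distance is compact and the relevant geometric quantity is continuous and strictly positive on it.
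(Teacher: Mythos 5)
Your high-level scaffolding is right (build a $\Phi$ satisfying Lemma~\ref{lem:potential-2} uniformly over all couplings, deduce $\varepsilon$-coupling via Lemma~\ref{lem:potential-1}), and you correctly identify the two problem cases (reflection coupling for the drift, perverse coupling for the volatility) and correctly locate where the no-segments hypothesis must act: in bounding the boundary drift contributions $\langle\cdot,\Normal\rangle\,\d L$ via compactness of $\{(x,y,v):x,y\in\overline D,\ \|x-y\|\geq\varepsilon,\ v=\Normal(y)\}$. That part matches the paper.

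However, your proposed mechanism for recovering requirement~\ref{req:1} (volatility bounded below) against the perverse coupling is wrong, and this is the crux of the whole proof. You write that near $\partial D$ ``the local-time push necessarily has a component transverse to $\unit$ \ldots and this contributes to $(\d\|X-Y\|)^2$.'' It cannot: the local-time integrals $\int\Normal\,\d L$ are of locally bounded variation and therefore contribute to $\Drift\d Z$ but \emph{never} to the bracket $(\d Z)^2$. No amount of boundary curvature can manufacture volatility out of local time, so an ansatz $\Phi=g(\|x-y\|)+h(x,y)$ in which all the volatility enters through $\|x-y\|$ (and $g$ is merely concave) genuinely fails for the perverse coupling — $\|X-Y\|$ has identically zero martingale part there, in the interior and at the boundary alike. (Incidentally the perverse coupling makes the Brownian part of $\d(X-Y)$ purely \emph{transverse} to $\unit$, not radial as you wrote; the radial component vanishes, which is why $\|X-Y\|$ is of bounded variation.) The paper's actual fix is of a different character: it takes $\Phi=V_{p,\delta}(x,y)=\tfrac12\|x-y\|^2+\tfrac{\delta}{2}\|x-p\|^2-\tfrac{\delta}{2}\|y-p\|^2$ with a pole $p\notin D$. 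The point is that $\nabla_x\Phi=x-y+\delta(x-p)$ and $-\nabla_y\Phi=x-y+\delta(y-p)$ have \emph{different Euclidean norms} whenever $\|x-y\|\geq\varepsilon$ and $\delta$ is small; since $\JJ$ is a contraction (from $\JJ^\top\JJ+\KK^\top\KK=\II^{(n)}$), the martingale part of $\Phi(X,Y)$ has bracket at least $(\|\nabla_x\Phi\|-\|\nabla_y\Phi\|)^2\,\d t>0$ — uniformly over every co-adapted coupling, in the interior, with no reference to the boundary and \emph{without} the no-segments hypothesis. That hypothesis is used only afterwards, in the drift step, exactly by the compactness argument you sketch. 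Also note the paper does not perform your reduction to $\KK=0$ and $\JJ$ orthogonal; it works directly with the general $\JJ,\KK$ decomposition of Lemma~\ref{lem:well-known}, and the reduction you invoke is merely a heuristic remark in the paper, not a licensed proof step.
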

 \begin{proof}
   It suffices to exhibit, for any fixed \(\varepsilon\), a
   function \(\Phi\) satisfying the two requirements of Lemma
   \ref{lem:potential-2}. Motivated by a similar construction used to
establish
the convex geometry of small
   hemispheres \citep{Kendall-1991c}, define (for \(p\not\in D\) and
   \(\delta>0\))
   \begin{multline}
     \label{eq:twisted-distance}
     V_{p,\delta}(x,y)
\;=\;
     \tfrac{1}{2} \|x-y\|^2 + \tfrac{\delta}{2}\|x-p\|^2-\tfrac{\delta}{2}\|y-p\|^2
     \;=\;
 \tfrac{1}{2} \|x-y\|^2 + \delta \langle
     x-y,\tfrac{x+y}{2}-p\rangle\,.
   \end{multline}
   Thus \(V_{p,\delta}\) is a hyperbolic perturbation of
   \(\tfrac{1}{2}\|x-y\|^2\) based on the \emph{pole} \(p\).  For any
   fixed \(\varepsilon>0\) and for all sufficiently small \(\delta>0\)
   depending on \(\varepsilon\), \(p\), and the geometry of \(D\) we show that
   \(\Phi=V_{p,\delta}\) satisfies the requirements of Lemma
   \ref{lem:potential-2}. The result then follows by applying Lemmas
   \ref{lem:potential-2} then \ref{lem:potential-1}.

The first step is to establish Lemma \ref{lem:potential-2}
requirement \ref{req:1} \textbf{(volatility bounded from below)}.
   Fixing \(\varepsilon>0\), suppose by virtue of Lemma \ref{lem:well-known}
that \(X\), \(Y\) satisfy
   \eqref{eq:reflected-coupling} for some co-adapted matrix processes
   \(\JJ\),
   \(\KK\). Applying It\^o's lemma and
   the fact that \(B\) and \(C\) are independent \(n\)-dimensional
   standard Brownian motions,
   \begin{multline}
     (\d \Phi(X,Y))^2\quad=\quad
     \left(\|X-Y+\delta(X-p) -
       \JJ(X-Y+\delta(Y-p))\|^2 +
       \|\KK(X-Y+\delta(Y-p))\|^2\right) \d t\\
     \quad\geq\quad \|X-Y+\delta(X-p) -
     \JJ(X-Y+\delta(Y-p))\|^2
     \d t\\
     \quad\geq\quad \left(\|X-Y+\delta(X-p)\| -
       \|X-Y+\delta(Y-p)\|\right)^2 \d t\,,
   \end{multline}
   since the third equation of \eqref{eq:reflected-coupling} implies
   that the linear map \(\JJ\) is a contraction.
   Suppose now that \(\|X-Y\|>\varepsilon\). If (for example)
   \begin{equation}\label{eq:first-delta-bound}
     \delta \quad<\quad \frac{\varepsilon}{2\sup\{\dist(p,w):w\in D\}}
   \end{equation}
   then
   \begin{multline}
     \|X-Y+\delta(X-p)\|^2\quad=\quad \|X-Y+\delta(Y-p)+\delta(X-Y)\|^2\\
     \quad=\quad \|X-Y+\delta(Y-p)\|^2
     +2\delta\|X-Y\|^2 + 2\delta^2\langle X-Y, \tfrac{X+Y}{2}-p\rangle\\
\quad\geq\quad
\|X-Y+\delta(Y-p)\|^2+2\delta\|X-Y\|^2 
- 2\delta^2|\langle X-Y, \tfrac{X+Y}{2}-p\rangle|
\\
\quad=\quad
\|X-Y+\delta(Y-p)\|^2
+2\delta\|X-Y\|^2%
\left(1-\delta\frac{\left|\langle \tfrac{X-Y}{\|X-Y\|},%
\tfrac{X+Y}{2}-p\rangle\right|}%
{\|X-Y\|}\right)
\\
     \quad\geq\quad \|X-Y+\delta(Y-p)\|^2 + \delta\varepsilon^2\,.
   \end{multline}
   This in turn implies
   \begin{multline}
     \|X-Y+\delta(X-p)\| - \|X-Y+\delta(X-Y)\| \quad\geq\quad
\\
\quad\geq\quad
     \frac{\delta\varepsilon^2}{\|X-Y+\delta(X-p)\| + \|X-Y+\delta(X-Y)\|}\\
     \quad\geq\quad \frac{\delta\varepsilon^2}%
     {(2+\delta)\diam{D}+\delta\sup\{\dist(p,w):w\in D\}}\,.
   \end{multline}
   Thus for all small enough \(\delta>0\) (bounded by \eqref{eq:first-delta-bound}) there
   is a constant \(a=a(\varepsilon,\delta,p,D)>0\) such that \(\d
   \Phi(X,Y))^2>a\d t\) while \(\|X-Y\|>\varepsilon\), no matter what
   co-adapted coupling is employed.  This establishes requirement
   \ref{req:1} of Lemma \ref{lem:potential-2}. Note that we have not
   yet used the condition that \(\partial D\) be free of line
   segments.

The second step is to establish Lemma \ref{lem:potential-2}
requirement \ref{req:2} \textbf{(drift bounded from above)}.
   From \eqref{eq:reflected-coupling},
   \begin{multline}
     \Drift \d \Phi(X,Y)\quad=\quad \\
\langle X-Y + \delta(X-p),
     \Normal(X)\rangle\d L^X +
     \langle Y-X - \delta(Y-p), \Normal(Y)\rangle\d L^Y
     + (n -
     \tfrac{1}{2}\trace(\JJ+\JJ^\top)\d
     t\,.
   \end{multline}
   Since
   \(\trace(\JJ+\JJ^\top)\geq
   -2n\), requirement \ref{req:2} of Lemma \ref{lem:potential-2} is
   established with \(b=2n\) if we can show, for \(x\), \(y\) in \(\overline{D}\)
   with \(\|x-y\|\geq\varepsilon\),
   \begin{equation}
     \begin{split}
       \langle x-y + \delta(x-p), \Normal(x)\rangle \leq 0 & \qquad
       \text{ when } x\in\partial D\,,
       \\
       \langle y-x - \delta(y-p), \Normal(y)\rangle \leq 0 & \qquad
       \text{ when } y\in\partial D\,.
     \end{split}
\label{eq:strict-convexity}
   \end{equation}
   \noindent
Both inequalities hold for all small enough \(\delta>0\) depending on \(\varepsilon\), \(p\), and the geometry of \(D\). Consider for example
the second inequality in \eqref{eq:strict-convexity}. The Euclidean set
   \[
   K\;=\;
   \{(x,y,v): x\in \overline{D}, y\in \partial D, v \text{ is
     unit inward-pointing normal to } \partial D \text{ at } y\}\,,
   \]
   is closed and bounded hence compact.  By convexity of \(D\) the inner
   product \(\langle y-x, v\rangle\) is non-positive on the subset \(K\) and can vanish
on \(K\) only when \(x=y\) or the segment
   between \(x\) and \(y\) lies in \(\partial D\). Moreover this inner
   product is a continuous function on \(K\).  But
   \(\partial D\) contains no line segments and so \(\langle y-x,
   v\rangle\) is negative everywhere on the compact set
   \(K\setminus\{(x,y,v):\|x-y\|<\varepsilon\}\), and therefore must
   satisfy a negative upper bound. Thus the second inequality of
   \eqref{eq:strict-convexity} also holds
   when \(\|x-y\|\geq\varepsilon\), for all small enough \(\delta\)
   depending on \(\varepsilon\), \(p\), and the geometry of \(D\).
The first inequality follows similarly.

   Hence both requirements of Lemma \ref{lem:potential-2} apply for
   \(\Phi=V_{p,\delta}\) for any fixed \(\varepsilon>0\)
   once \(\delta\) is small enough (depending on \(\varepsilon\) and
   the geometry of \(D\)). It follows from Lemma \ref{lem:potential-1}
   that any co-adapted coupling \(X,Y\) eventually attains
   \(\dist(X,Y)\leq\varepsilon\) for any \(\varepsilon>0\), and hence
   no co-adapted coupling can be shy.
 \end{proof}

\begin{thm}\label{thm:planar-case}
  Let \(D\) be a bounded convex planar domain. Then no co-adapted
  coupling of reflected Brownian motions in \(D\) can be shy.
\end{thm}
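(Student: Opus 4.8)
The plan is to prove Theorem~\ref{thm:planar-case} exactly as Theorem~\ref{thm:no-segments}: for each fixed $\varepsilon>0$ I would exhibit a continuous $\Phi$ on $F=(\overline D\times\overline D)\setminus\{\dist(x,y)<\varepsilon\}$ obeying the two requirements of Lemma~\ref{lem:potential-2}, so that Lemmas~\ref{lem:potential-2} and~\ref{lem:potential-1} force every co-adapted coupling to $\varepsilon$-couple, and then let $\varepsilon\to0$. The proof of Theorem~\ref{thm:no-segments} uses the hypothesis ``no line segments'' only to obtain the inequalities~\eqref{eq:strict-convexity}, i.e.\ requirement~\ref{req:2}; so the whole task is to find a $\Phi$ whose drift stays controlled near segment configurations without losing the lower volatility bound~\ref{req:1}. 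The first simplification is that for a \emph{fixed} $\varepsilon$ only finitely many segments are troublesome: a bounded convex $D$ has at most finitely many boundary segments of length $\geq\varepsilon$, and on a boundary segment $\ell$ of length $<\varepsilon$ the quantity $\langle y-x,v_\ell\rangle$ (with $v_\ell$ the inward normal of $\ell$) is bounded above by a negative constant on the compact set $\{x\in\overline\ell,\ y\in\overline D,\ \|x-y\|\geq\varepsilon\}$, so short segments may be absorbed into the strictly convex boundary points already treated in Theorem~\ref{thm:no-segments}. Write $\ell_1,\dots,\ell_N$ for the long segments and $v_j$ for the inward normal of $\ell_j$.

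Two observations organise the construction. First, a short optimisation over the coupling matrices $(\JJ,\KK)$ shows that requirement~\ref{req:1} amounts to $\bigl|\,\|\nabla_x\Phi\|-\|\nabla_y\Phi\|\,\bigr|$ being bounded below on $F$; second, requirement~\ref{req:2} needs exactly $\langle\nabla_x\Phi(x,y),\Normal(x)\rangle\leq0$ for $x\in\partial D$ and $\langle\nabla_y\Phi(x,y),\Normal(y)\rangle\leq0$ for $y\in\partial D$. Since $V_{p,\delta}$ is affine in $p$, the natural device is to take a pointwise minimum over admissible poles:
\[
\Phi(x,y)\;=\;\min_{p\in\overline D}V_{p,\delta}(x,y)\;=\;\tfrac12\|x-y\|^2+\tfrac{\delta}{2}\bigl(\|x\|^2-\|y\|^2\bigr)-\delta\,h_D(x-y)\,,
\]
with $h_D$ the support function of $D$. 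For this $\Phi$ the $\delta$-term of $\|\nabla_x\Phi\|^2-\|\nabla_y\Phi\|^2$ equals $2\delta\|x-y\|^2$ no matter which branch of the minimum is active, so requirement~\ref{req:1} holds robustly; the gradient $\nabla_x\Phi$ equals $(x-y)+\delta\bigl(x-P(x-y)\bigr)$ with $P(x-y)\in\overline D$, so $\langle\nabla_x\Phi,\Normal(x)\rangle\leq0$ at \emph{every} boundary point, segments included; and the kinks of the concave function $-\delta h_D$ contribute only non-positive \Ito--Tanaka local-time drift. What is \emph{not} automatic is the other half of requirement~\ref{req:2}, the sign of $\langle\nabla_y\Phi,\Normal(y)\rangle=\langle y-x,\Normal(y)\rangle+\delta\langle P(x-y)-y,\Normal(y)\rangle$: the second term is non-negative and, when $y$ lies on a long segment $\ell_j$ while $x$ lies within $O(\delta)$ of the line through $\ell_j$, it can dominate.

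Removing this last defect is the heart of the matter, and I would do it by passing to a further finite minimum, pairing $\Phi$ with its $x\leftrightarrow y$ partner $\Phi(y,x)$ (which has the analogous properties with the roles of $X$ and $Y$ interchanged) and, if necessary, with finitely many additional $V_{p,\delta}$ whose poles lie on the supporting lines of the $\ell_j$; the parameter $\delta$ is then chosen small, last of all, so that the $O(\delta^2)$ terms and the contributions of the correction terms are absorbed into requirement~\ref{req:1}. The hard part will be the case of a \emph{pair of parallel long segments} --- the rectangle is the model --- where no single potential built from a uniformly-convex-type perturbation of $\tfrac12\|x-y\|^2$ can satisfy requirement~\ref{req:2} at both members of the pair and requirement~\ref{req:1} simultaneously (the positivity of a Hessian-like form needed for~\ref{req:1} is incompatible with the gradient being forced tangent to two opposite parallel edges). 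One must instead verify that throughout the thin region of $(x,y)$-space in which both components sit near a given long segment $\ell_j$, it is a branch of the minimum that is ``good at $\ell_j$'' --- meaning both its $\d L^X$- and $\d L^Y$-coefficients along $\ell_j$ are non-positive --- which actually attains the minimum there; the delicate sub-case is $Y$ near an endpoint of $\ell_j$ with $X$ near its middle, and it is precisely here, through the fact that in the plane there are only countably many segment directions and the boundary normal turns monotonically, that the planar hypothesis is used.
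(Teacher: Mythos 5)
Your plan shares the backbone of the paper's proof: reduce to the finitely many maximal boundary segments of length $\geq\varepsilon$, build $\Phi$ as a finite minimum of hyperbolic potentials of $V_{p,\delta}$ type with poles attached to those segments, invoke the Tanaka formula so that the drift only needs controlling on the active branch, and extract the lower volatility bound from the $2\delta\|x-y\|^2$ leading term of $\|\nabla_x\Phi\|^2-\|\nabla_y\Phi\|^2$. You also correctly isolate parallel long segments (the rectangle) as the obstruction. But the proposal stops short of the one device without which the rest does not close.

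The gap is the tie-breaking mechanism between parallel segments. Your potentials $V_{p,\delta}$ --- and the envelope $\min_{p\in\overline D}V_{p,\delta}=\tfrac12\|x-y\|^2+\tfrac{\delta}{2}(\|x\|^2-\|y\|^2)-\delta h_D(x-y)$ built from the support function --- distinguish poles only through the \emph{linear} form $\langle x-y,p\rangle$. When $\ell_i\parallel\ell_j$ and $x,y\in\ell_i$, the quantities $\langle x-y,\tfrac{x+y}{2}-p_i^\pm\rangle$ and $\langle x-y,\tfrac{x+y}{2}-p_j^\pm\rangle$ differ only at second order in $1/R$ when the poles are placed far out; so nothing in your construction forces the branch belonging to $\ell_i$ to be the one attaining the minimum precisely where you need its drift to have the right sign. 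The paper resolves this by replacing $V_{p,\delta}$ with $\widetilde{V}_{p,\delta,S}$ in \eqref{eq:perturbedV}, whose coefficient $\kappa=\delta\exp\!\left(-\tfrac1S\|\tfrac{x+y}{2}-p\|\right)$ damps distant poles; it then puts poles at the intersections of the segment lines with a circle of radius $R$, fixes $1<S/R<2$, and uses the uniform asymptotics \eqref{eq:asymptotics} to show strictly that the nearer pole wins. That exponential re-weighting is the genuinely new idea of the planar case, and it is absent from your sketch --- the final paragraph gestures at the right locus but supplies no functional that selects the good branch there.

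A secondary issue: the planar hypothesis is not being used in the way you invoke it. Countability of segment directions and monotone turning of the boundary normal play no role in the argument that is actually needed; what planarity buys is that each supporting line meets a fixed large circle in exactly \emph{two} points, so the pole set is finite and the localization needed to apply the Tanaka formula branch-by-branch (requirement~\ref{req:2} of Lemma~\ref{lem:potential-2} on each set $\Phi_i=\Phi$) can be carried out with only finitely many cases. In higher dimensions the natural pole set becomes a continuum of hypercircles, which is exactly why that localization step breaks down there. As written, your appeal to monotone turning gives no concrete criterion for the parallel rectangle you yourself raise.

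Everything else in your proposal is sound and matches the paper: the reduction to long segments, the envelope idea as a motivation, pairing with the $x\leftrightarrow y$ swap (the paper's minimum \eqref{eq:minimum-1} indeed contains both $\widetilde{V}_{p,\delta,S}(x,y)$ and $\widetilde{V}_{p,\delta,S}(y,x)$), the use of the It\^o--Tanaka formula to absorb the kinks of a concave minimum, and the observation that the drift needs sign control only on the active branch. The missing ingredient is the $\kappa$-damping and the $R$-asymptotics that make the localization at parallel segments actually go through.
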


\begin{proof}
  The proof strategy is the same as for Theorem \ref{thm:no-segments}:
  exhibit, for any fixed \(\varepsilon>0\), a function \(\Phi\)
  satisfying the requirements of Lemma \ref{lem:potential-2}. However
  the function will now be the minimum
  \(\Phi=\Phi_1\wedge\ldots\wedge\Phi_k\) of a finite number of
  functions \(\Phi_i\) of the form \(V_{p,\delta}\) or a mild
  generalization thereof. The Tanaka formula \citep[VI Section
  1.2]{RevuzYor-1991} shows that if each \(\Phi_i\) satisfies
  requirement \ref{req:1} of Lemma \ref{lem:potential-2}, and
  satisfies requirement \ref{req:2} on \(\Phi_i=\Phi\), then \(\Phi\)
  satisfies both requirements and so we may apply Lemmas
  \ref{lem:potential-2} and then \ref{lem:potential-1} as above.

  The mild generalization modifies \(V_{p,\delta}(x,y)\) by a small
  perturbation. This is required in order to deal with the possibility that \(\partial D\) contains  parallel line segments.
\begin{multline}
  \label{eq:perturbedV}
  \widetilde{V}_{p,\delta,S}(x,y)\quad=\quad
\tfrac{1}{2}\|x-y\|^2
+\tfrac{\kappa}{2}\left(\|x-p\|^2-\|y-p\|^2\right)
\\
\quad=\quad
\tfrac{1}{2}\|x-y\|^2
+\kappa\langle x-y,\tfrac{x+y}{2}-p\rangle\,,\\
\text{ where now } \kappa =
\delta\exp\left(-\tfrac{1}{S}\|\tfrac{x+y}{2}-p\|\right)\,.
\end{multline}
Thus the asymmetry of the hyperbolic perturbation
\(\widetilde{V}_{p,\delta,S}\) now depends on the distance of the
mid-point \(\tfrac{x+y}{2}\) from the pole \(p\).

The first step is to establish Lemma \ref{lem:potential-2} requirement
\ref{req:1} \textbf{(volatility bounded from below)}.  The argument
runs much as for Theorem \ref{thm:no-segments}, but is further complicated by the
need to deal with parallel line segments in \(\partial D\).  For
convenience we introduce the notation
\begin{equation}\label{eq:notation}
  Z \quad=\quad \tfrac{X+Y}{2}-p\,,\qquad\qquad
  \unit \quad=\quad Z/\|Z\|\,.
\end{equation}
Thus (using the fact that \(\JJ\) is a
contraction)
\begin{multline*}
(\d \widetilde{V}_{p,\delta,S}(X,Y))^2 / \d t\quad=\quad\\
\|
X-Y+\kappa(X-p-\tfrac{1}{2S}\langle X-Y,Z\rangle\unit)
-
\JJ(
X-Y+\kappa(Y-p+\tfrac{1}{2S}\langle X-Y,Z\rangle\unit)
)
\|^2+\\
+
\|
\KK(
X-Y+\kappa(Y-p+\tfrac{1}{2S}\langle X-Y,Z\rangle\unit)
)
\|^2
\\
\quad\geq\quad
\Big(
\|X-Y+\kappa(X-p-\tfrac{1}{2S}\langle X-Y,Z\rangle\unit)\|
-
\|X-Y+\kappa(Y-p+\tfrac{1}{2S}\langle X-Y,Z\rangle\unit)\|
\Big)^2
\,.
\end{multline*}
\noindent
Now consider the difference of the squared norms of the summands:
if
\begin{align}\label{eq:conditions-1}  
S &\quad>\quad \sup\{\dist(p,w):w\in D\}\,,\\
\kappa\;\leq\;\delta&\quad<\quad\varepsilon/\sup\{\dist(p,w):w\in D\}
\label{eq:conditions-2}  
\end{align}
(so that in particular \(1-\tfrac{\|Z\|}{S}>0\)) then
\begin{multline*}
\|X-Y+\kappa(X-p-\tfrac{1}{2S}\langle X-Y,Z\rangle\unit)\|^2
-
\|X-Y+\kappa(Y-p+\tfrac{1}{2S}\langle X-Y,Z\rangle\unit)\|^2\\
\;=\;
2\kappa\langle
X-Y+\kappa Z,
X-Y-\tfrac{1}{S}\langle X-Y,Z\rangle\unit
\rangle
\\
\;=\;
2\kappa\left(\|X-Y\|^2
-
\tfrac{\|Z\|}{S}\langle X-Y,\unit\rangle^2
+\kappa \|Z\|(1-\tfrac{\|Z\|}{S})\langle X-Y,\unit\rangle\right)\\
\;\geq\;
2\kappa\|X-Y\|^2\left(1-\tfrac{\|Z\|}{S}\right)
-2\kappa\left(\kappa \|Z\|(1-\tfrac{\|Z\|}{S})|\langle X-Y,\unit\rangle|\right)
\\
\;\geq\;
2\kappa\|X-Y\|^2\left(1-\tfrac{\|Z\|}{S}\right)
\left(
1-\kappa\|Z\|\tfrac{|\langle X-Y,\unit\rangle|}{\|X-Y\|^2}
\right)
\\
\quad\geq\quad
2\kappa\|X-Y\|^2\left(1-\tfrac{\sup\{\dist(p,w):w\in D\}}{S}\right)\left(1
-\kappa\frac{\sup\{\dist(p,w):w\in D\}}{\varepsilon}\right)
\,.
\end{multline*}
It follows from inequalities \eqref{eq:conditions-1} and \eqref{eq:conditions-2} that we obtain a positive lower bound on
\[
\|X-Y+\kappa(X-p-\tfrac{1}{2S}\langle X-Y,Z\rangle\unit)\|
-
\|X-Y+\kappa(Y-p+\tfrac{1}{2S}\langle X-Y,Z\rangle\unit)\|
\]
subject to the further condition (required to obtain a lower bound on each of
the two norms in the difference above)
\begin{equation}
  \label{eq:conditions-3}
\kappa\;\leq\;\delta\quad<\quad
\varepsilon/\left(\sup\{\dist(p,w):w\in D\}+\tfrac{1}{2}\diam(D)\right)\,.
\end{equation}

Now, for a finite set of poles \(p^\pm_i\), to be determined below in the ``localization argument'',  consider
\begin{equation}\label{eq:minimum-1}
\Phi(x,y)\;=\;
 \bigwedge_{i=1}^{m} \left(\widetilde{V}_{p^{+}_i,\delta,S}(x,y)\wedge \widetilde{V}_{p^{+}_i,\delta,S}(y,x)
   \wedge  \widetilde{V}_{p^{-}_i,\delta,S}(x,y)\wedge \widetilde{V}_{p^{-}_i,\delta,S}(y,x)\right)\,.
\end{equation}
Subject to \(S\) and \(\kappa\) satisfying the conditions
(\ref{eq:conditions-1}, \ref{eq:conditions-2}, \ref{eq:conditions-3})
as \(p\) runs through the poles \(p^\pm_j\)
it then follows from the Tanaka formula that \(\Phi\) satisfies
requirement \ref{req:1} of Lemma \ref{lem:potential-2}.

\begin{figure}[htb]
\includegraphics[width=3in]{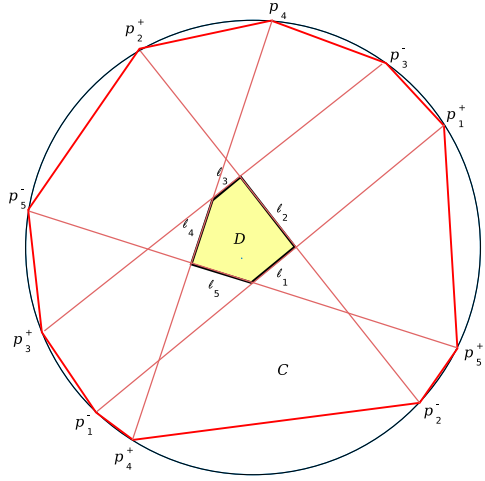}
  \centering
  \caption{Circle \(C\) centred in \(D\) of large radius \(R\), with
    intersection points produced by extending the maximal line
    segments in \(\partial D\) (for simplicity \(D\) is chosen to be a
    polygon). Note that \(\ell_1\) and \(\ell_3\) are parallel.}
\label{fig:figureC}
\end{figure}

The second step is a \textbf{``localization'' argument} aimed
eventually at showing that requirement \ref{req:2} of Lemma
\ref{lem:potential-2} (drift bounded from above) is satisfied on each
locus \(\Phi_i=\Phi\). To this end we must first specify the various poles
\(p^\pm_i\) involved in the different \(\Phi_i\) making up
\(\Phi=\Phi_1\wedge\ldots\wedge\Phi_k\) and identify the region where
\(\Phi_i=\Phi\) (thus, ``localizing'').  For fixed \(\varepsilon>0\)
there can only be finitely many maximal linear segments \(\ell_1\),
\ldots, \(\ell_m\subset\partial D\) of length at least
\(\varepsilon\).  Fix a circle \(C\) centred in \(D\) of radius \(R\),
and locate the poles \(p^{+}_i\), \(p^{-}_i\) at the intersection
points on \(C\) of the line defined by \(\ell_i\), sign \(\pm\) chosen
according to orientation (Figure \ref{fig:figureC}). Here \(R\) must
be chosen to be large enough to fulfil asymptotics given below at
\eqref{eq:asymptotics}: in addition we require that \(R>\diam(D)
\csc(\phi)\), where \(3\phi\) is the minimum modulo \(\pi\) of the
non-zero angles between lines \(\ell_i\), \(\ell_j\). The rationale
for this is that if no other line segments are parallel to a given
\(\ell_i\), and if \(x\), \(y\in\ell_i\), then the term corresponding
to \(\ell_i\) (\emph{via} the poles \(p^\pm_i\)) in the minimum \eqref{eq:minimum-1} is then the unique
minimizer.  For example \(\widetilde{V}_{p^{+}_i,\delta,S}(x,y)\) is
the unique minimizer if \(\kappa \langle
x-y,\tfrac{x+y}{2}-p_i^+\rangle\) is the unique minimum of the
corresponding inner products. Hence \(R>\diam(D)\csc(\phi)\) suffices to
localize in the absence of parallelism, by a simple geometric argument
indicated in Figure \ref{fig:figureG}. This argument uses the remark
(established by calculus) that
\[
\exp\left(-
\frac{1}{S}\|\tfrac{x+y}{2}-p_i^+\|
\right)\langle x-y,\frac{x+y}{2}-p_i^+\rangle
\]
is the minimum if no other \(p_j^\pm\) is separated from \(D\) by the perpendicular to \(\ell_i\) at \(p_i^+\)
(which follows from the constraint of \eqref{eq:conditions-1},
which we have required for all poles \(p\)).

\begin{figure}[htb]
\includegraphics[width=3in]{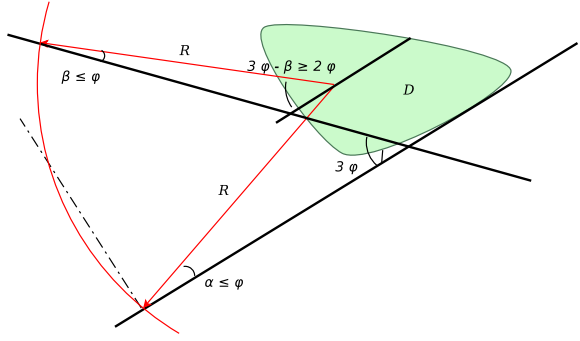}
  \centering
  \caption{ Let \(3\phi\) be the minimum modulo \(\pi\) of the
    non-zero angles between line segments \(\ell_i\), \(\ell_j\).  If
    \(R>\diam(D) \csc(\phi)\) and \(\ell_i\) is parallel to no other
    line segments then the poles \(p^{+}_i\), \(p^{-}_i\) for a given
    \(\ell_i\) can be seen to supply the minimum in \eqref{eq:minimum-1} when
    \(x\), \(y\in\ell_i\).}
\label{fig:figureG}
\end{figure}

In case some of the \(\ell_i\) are parallel, then we must use the
particular features of \(\widetilde{V}_{p,\delta,S}\) as opposed to
\(V_{p,\delta}\), taking into account the expression \eqref{eq:perturbedV}
for \(\kappa\). Suppose that \(x\),
\(y\in\ell_i\) and \(\ell_i\) is parallel to \(\ell_j\). Establish
cartesian coordinates \((u,v)\) centred at \(\tfrac{x+y}{2}\), for
which \(\ell_i\) lies on the \(u\)-axis. Without loss of generality
suppose that \(\ell_j\) lies on the locus \(v=h\). Suppose the circle
\(C\) is centred at \((u_0, v_0)\) lying between \(\ell_i\) and
\(\ell_j\) (Figure \ref{fig:parallel}).

We need to show that, for all large enough \(R\) and all other poles \(p^\pm_j\) for which \(\langle x-y,\frac{x+y}{2}-p_j^\pm\rangle\) has the same sign as
\(\langle x-y,\frac{x+y}{2}-p_i^+\rangle\),
\begin{equation} \label{eq:criterion-2}
  \frac{\exp\left(-
\frac{1}{S}\|\tfrac{x+y}{2}-p_i^+\|
\right)\langle x-y,\frac{x+y}{2}-p_i^+\rangle}%
{\exp\left(
-\frac{1}{S}\|\tfrac{x+y}{2}-p_j^\pm\|
\right)\langle x-y,\frac{x+y}{2}-p_j^\pm\rangle}%
\quad>\quad 1\,,
\end{equation}
and we now show that this will be the case if \(1<\sigma=S/R<2\) for
large enough \(R\) (note that once \(R\) is large enough this is compatible with
\eqref{eq:conditions-1}, which is our other
requirement on \(R\)).

\begin{figure}[htb]
\includegraphics[width=3in]{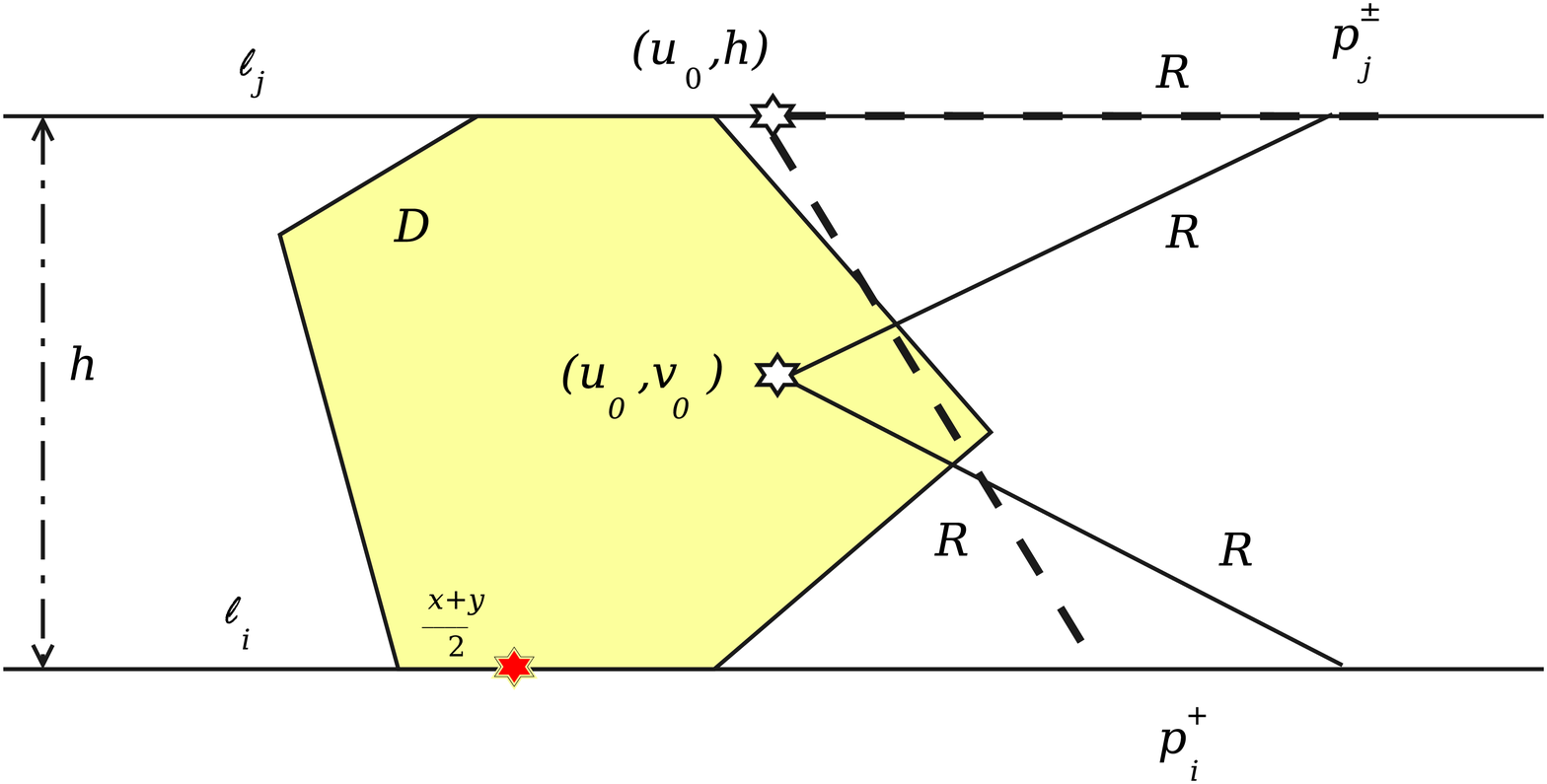}
  \centering
  \caption{ Illustration of the movement of poles \(p_i^+\), \(p_j^\pm\) when the
    centre of \(C\) is moved perpendicularly to the parallel line
    segments \(\ell_i\), \(\ell_j\).  }
\label{fig:parallel}
\end{figure}

First note that it suffice to consider the case when \(v_0\) is as
large as possible. For if the centre of \(C\) is moved say from
\((u_0,v_0)\) to \((u_0,h)\) then the pole \(p_j^\pm\) is moved further out on
\(\ell_j\), while the pole \(p_i^+\) is brought closer in on \(\ell_i\) (see
Figure \ref{fig:parallel}). Calculus shows that
\[
u \exp\left(-\tfrac{1}{S}\sqrt{u^2+v^2}\right)
\]
is increasing in \(u\) for
\(0<u<S\sqrt{\tfrac{1+\sqrt{1+4v^2/S^2}}{2}}\) (true for all feasible
moves when \(\sigma=S/R>1\), since \(R>\diam(D)\csc(\phi)\) and \(0<\phi<\pi/3\)), so it follows that such a move will
increase the denominator and decrease the numerator of
\eqref{eq:criterion-2}.

We now establish asymptotics which are uniform in \((u_0, v_0)\in D\); indeed, uniformly for \(-\diam(D)\leq u_0\leq\diam(D)\) and holding
\(\sigma=S/R\) fixed,
\begin{multline}\label{eq:asymptotics}
    \frac{\exp\left(-
\frac{1}{S}\|\tfrac{x+y}{2}-p_i^+\|
\right)\langle x-y,\frac{x+y}{2}-p_i^+\rangle}%
{\exp\left(
-\frac{1}{S}\|\tfrac{x+y}{2}-p_j^\pm\|
\right)\langle x-y,\frac{x+y}{2}-p_j^\pm\rangle}
\quad\geq\quad
\frac{\sqrt{R^2-h^2}+u_0}{R+u_0}\times
\frac{\exp\left(-\frac{1}{S}(\sqrt{R^2-h^2}+u_0)\right)}%
{\exp\left(-\frac{1}{S}\sqrt{(R+u_0)^2+h^2}\right)}
\\
\;=\;
\left(1-\frac{R-\sqrt{R^2-h^2}}{R+u_0}\right)
\times
\exp\left(\frac{1}{R \sigma}\left(
(R-\sqrt{R^2-h^2})
+
(\sqrt{(R+u_0)^2+h^2}-(R+u_0))
\right)\right)\\
\;\geq\;
\left(
1-\tfrac{R}{R+u_0}(\tfrac{h^2}{2R^2}+o(R^{-2}))
\right)
\times
\exp\left(\frac{1}{\sigma}\left(
\tfrac{h^2}{2R^2}+o(R^{-2})
+
\tfrac{R+u_0}{R}(\tfrac{h^2}{2(R+u_o)^2}+o((R+u_0)^{-2}))
\right)\right) \\
\quad=\quad
1+\left(\tfrac{1}{\sigma}-\tfrac{1}{2}\right)\tfrac{h^2}{R^2}
 + o(\tfrac{1}{R^2})\,.
\end{multline}
It follows from these uniform asymptotics
that if \(1<\sigma<2\) then \eqref{eq:criterion-2} is
satisfied for all large enough \(R\). 

As a consequence of these considerations, and bearing in mind the
continuity properties of the criterion ratio on the left-hand side of
\eqref{eq:criterion-2}, if we fix \(S/R\in(1,2)\) and \(R\) large enough then (noting \(\|x-y\|>\varepsilon\)) it follows that for all sufficiently small
\(\eta>0\),
if \(x\), \(y\in D\cap(\ell_i\oplus\ball(0,\eta))\) and one of \(x\),
\(y\in\partial D\) then the active component of \eqref{eq:minimum-1}
is the one involving \(\ell_i\) \emph{via} the poles \(p_i^\pm\).

The final step is to establish Lemma \ref{lem:potential-2}
requirement \ref{req:2} \textbf{(drift bounded from above)}. When \(p^\pm\) is the active pole, the drift is given by
   \begin{multline*}
\Big\langle X-Y + \kappa(X-p^\pm)
 -\tfrac{\kappa}{2S}\langle X-Y,Z\rangle \unit,
     \Normal(X)\Big\rangle\d L^X 
+
\Big\langle Y-X - \kappa(Y-p^\pm)
 -\tfrac{\kappa}{2S}\langle X-Y,Z\rangle \unit,
 \Normal(Y)\Big\rangle\d L^Y\\
     + 
(n - \tfrac{1}{2}\trace(\JJ+\JJ^\top))\d t
   \end{multline*}
   (using the notation of the proof of Theorem
   \ref{thm:planar-case}). In order to establish requirement
   \ref{req:2} of Lemma \ref{lem:potential-2}, it suffices to show
   non-positivity of
\[
\Big\langle X-Y + \kappa(X-p^\pm)
 -\tfrac{\kappa}{2S}\langle X-Y,Z\rangle \unit,
     \Normal(X)\Big\rangle
\]
when \(X\in\partial D\), and of
\[
\Big\langle Y-X - \kappa(Y-p^\pm)
 -\tfrac{\kappa}{2S}\langle X-Y,Z\rangle \unit,
 \Normal(Y)\Big\rangle
\]
when \(Y\in\partial D\). Bearing in mind that \(\|X-Y\|\leq\diam(D)\)
and \(\kappa\leq\delta\), if
\begin{equation}
  \label{eq:drift-normal-bound}
\delta \quad<\quad
\frac{\xi}{\diam(D)+R}
\times
\frac{1}{1+\tfrac{1}{2S}\diam(D)}
\end{equation}
then this follows directly when \(\langle X-Y,\Normal(X)\rangle>
\xi\) (in case \(X\in\partial D\)) or when \(\langle
Y-X,\Normal(Y)\rangle> \xi\) (in case \(Y\in\partial D\)).

It remains to consider the case when this does not happen. By choosing \(\xi\) small enough,
we may then ensure that the two Brownian motions are both close to the same segment portion of the boundary.
For the function
\[
\frac{\langle y-x,\nu\rangle}{\|x-y\|}
\]
is continuous on the closed and bounded (and therefore compact)
Euclidean subset
\[
H\quad=\quad
\left\{
(x,y,\nu)\;:\;
x\in D, y\in\partial D, \|x-y\|\geq\varepsilon, \nu \text{ normal at }y
\right\}\,,
\]
and vanishes only on \(\bigcup_i\{(x,y,\nu)\in H: x,y\in\ell_i\}\).
Consequently for any \(\eta>0\) we can find \(\xi>0\) such that 
\[
\langle y-x,\Normal(y)\rangle\quad<\quad \|x-y\|\xi
\]
forces \(x\), \(y\in\ell_i\oplus\ball(0,\eta)\) for some \(i\). If
\(\eta\) is chosen as above then this in turn forces \(p^\pm_i\) to be
the pole for \(x\), \(y\).

We now argue for the case when \(y\in\partial(D)\); the case when \(x\in\partial(D)\) is similar.  We can choose \(\eta>0\) as small as we wish: we therefore require that
\(\sqrt{1-\eta^2/\varepsilon^2}-\eta/\varepsilon>0\).

\begin{figure}[htb]
\includegraphics[width=5in]{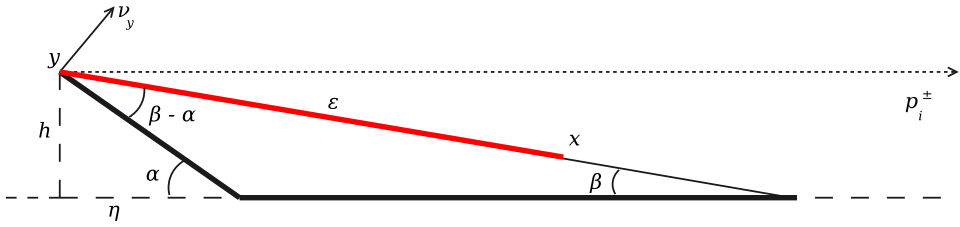}
  \centering
  \caption{Geometric construction underlying analysis of singular
    drift when \(y\in\partial D\) and \(\langle
    y-x,\Normal(y)\rangle<\|x-y\|\xi\).}
\label{fig:figureD}
\end{figure}

Suppose then that \(y\) is at perpendicular distance \(h<\eta\) from
the line through \(\ell_i\) (Figure \ref{fig:figureD}). If \(x\)
is further away from \(\ell_i\) than \(y\), then the singular drift
will certainly be non-negative once \eqref{eq:drift-normal-bound} is
satisfied, since the segment \(y-x\) then makes a smaller angle with
\(\Normal(y)\) than does either \(y-p^\pm_i\) or
\(\tfrac{x+y}{2}-p_i^\pm\). Otherwise (using the angle notation indicated in Figure \ref{fig:figureD}) we require non-negativity of
\[
\varepsilon \sin(\alpha-\beta) - \delta\times(\diam(D)+R)(1
+\frac{1}{2S}\diam(D))\sin\alpha,.
\]
Hence we require
\begin{multline}
  \label{eq:final}
\delta \quad<\quad
\frac{\varepsilon}{(\diam(D)+R)(1+\tfrac{1}{2S}\diam(D)}
\frac{\sin(\alpha-\beta)}{\sin\alpha} \\
\quad=\quad\frac{\varepsilon}{(\diam(D)+R)(1+\tfrac{1}{2S}\diam(D))}
(\cos\beta - \sin\beta\cot\alpha)
\\
\quad\leq\quad
\frac{\varepsilon}{(\diam(D)+R)(1+\tfrac{1}{2S}\diam(D))}
\left(
\sqrt{1-\tfrac{h^2}{\varepsilon^2}}
-
\tfrac{h}{\varepsilon}\tfrac{\eta}{h}
\right)\\
\quad\leq\quad
\frac{\varepsilon}{(\diam(D)+R)(1+\tfrac{1}{2S}\diam(D))}
\left(
\sqrt{1-\tfrac{\eta^2}{\varepsilon^2}}
-
\tfrac{\eta}{\varepsilon}
\right)\,.
\end{multline}
Thus for all sufficiently small \(\delta\) and sufficiently large \(R\) the singular drift is non-positive (the case of \(x\in\partial
D\) following by the same arguments) and so the drift is bounded above
as required, thus completing the proof.

\end{proof}

 \section{Conclusion}\label{sec:conclusion}

The above shows how rather direct potential-theoretic methods permit the deduction of non-existence of shy couplings for reflected Brownian motion in bounded convex domains, so long as either (a) the domain boundary contains no line segments (Theorem \ref{thm:no-segments}) or (b) the domain is planar (Theorem \ref{thm:planar-case}). One is immediately led to the conjecture that there are no shy couplings for reflected Brownian motion in \emph{any} bounded convex domains. In the planar case one can extend the substantial linear portions of the boundary to produce a finite set of poles \(p^\pm\), leading in turn to the key function \(\Phi\) expressed as a minimum of a finite number of simpler functions. Similar constructions will in the general case lead to a continuum of possible poles, contained in an essential continuum of different hyperplanes, and this variety of poles is an obstruction to generalization of the crucial localization analysis in the proof of Theorem \ref{thm:planar-case}. Careful geometric arguments allow progress to be made in the case of bounded convex polytopes, for which attention can be confined to a finite number of hyperplanes and hence to sets of poles forming hypercircles \emph{via} intersection of a large hypersphere with the hyperplanes. However even in this limited special case tedious arguments are required in order to overcome problems arising from intersections of the hypercircles, and we omit the details.

Further variants on the general theme of shy-ness are possible. For example, it is not hard to use the comparison techniques described in \citet[Chapter 2]{JostKendallMoscoRocknerSturm-1997} to show the following: if \(\mathbb{M}\) is a Riemannian manifold with sectional curvatures all bounded above by a finite constant \(\kappa^2\), then there exist \(\varepsilon\)-shy couplings of Brownian motion on \(\mathbb{M}\) for all \(\varepsilon\) satisfying
\[
\varepsilon \quad<\quad 
\min\left\{\tfrac{\pi}{2\kappa}, \text{ injectivity radius of }\mathbb{M}\right\}\,.
\]
These couplings are geometric versions of the perverse coupling described in Section \ref{sec:introduction}.

However the major challenge remains the conjecture of \citet[Open problem 4.5(ii)]{BenjaminiBurdzyChen-2007}, who ask whether there is any simply connected planar domain which supports a shy coupling of reflected Brownian motions. The present work brings us close to understanding shy coupling for convex domains; progress in resolving the \citet{BenjaminiBurdzyChen-2007} conjecture requires development of completely new techniques not dependent on convexity at all.

\section*{Acknowledgements}
I am grateful to Chris Burdzy, Jon Warren, and Saul Jacka for very helpful conversations.

   \bibliographystyle{plainnat}
   \bibliography{abbrev,Shy}

\end{document}